\documentclass[10pt]{amsart}
\begin{document}
\title {Closures of K-orbits in the flag variety for $U(p,q)$}
\author {William M. McGovern}
\begin{abstract}
We classify the $GL_p\times GL_q$-orbits in the flag variety for
$GL_{p+q}$ with rationally smooth closure, showing that they are all either
already closed or are pullbacks from orbits with smooth closure in a
partial flag variety.
\end{abstract}
\maketitle

\section{Introduction}
Let $G$ be a complex reductive group with Borel subgroup $B$.  The
question of which Schubert varieties in the flag variety $G/B$ are
smooth has received a great deal of attention, particularly in recent
years \cite{BL,BP}.  Less well studied, but very important
for representation theory, are the closures of orbits in $G/B$ under
the action of the fixed point subgroup $K:=G^{\theta}$ of $G$, where
$\theta$ is an involutive automorphism of $G$ \cite{LV}.  Such
orbit closures have been called symmetric varieties by Springer and
are studied by him in \cite{Sp}.  In this paper we use his
techniques to decide which symmetric varieties are smooth in the
special case $G=GL(p+q,{\mathbb C}), K=GL(p,{\mathbb C})\times
GL(q,{\mathbb C})$.  We will give a pattern avoidance criterion for
rational smoothness, along the lines of the well-known one for
rational smoothness of Schubert varieties in type $A$.  We will also
show that all rationally smooth 
symmetric varieties in this case are either closed orbits or pullbacks
of smooth varieties in partial flag varieties and so in particular are
smooth.  In a joint paper with Peter Trapa, we extend the
characterization of the rationally smooth symmetric varieties to the
flag varieties for the real groups $Sp(p,q)$ and $SO^*(2n)$ \cite{MT08}.

I would like to thank Peter Trapa for many very helpful conversations, Leticia Barchini for pointing out an error in an earlier version of this paper, and Ben Wyser for pointing out yet another error.

\section{Preliminaries}
Now let $G=GL(n,{\mathbb C})$, where $n = p+q$, and take $\theta$ to
be conjugation by a diagonal matrix on $G$ with $p$ eigenvalues 1 and
$q$ eigenvalues $-1$,
so that $K=G^{\theta} = GL(p,{\mathbb C})\times
GL(q,{\mathbb C})$.  This group may also be viewed as the
complexification of the maximal compact subgroup $U(p)\times U(q)$ of
the real form $U(p,q)$ of $G$.  Let $B$ be the
subgroup of upper triangular matrices in $G$.  As is well known, the
quotient $G/B$ may be identified with the set of complete flags
$V_0\subset V_1\subset\cdots\subset V_n$ in $\mathbb C^n$. Let $P$ be
the span of the first $p$ vectors of the standard basis of $\mathbb
C^n$.  Recall
that $K$-orbits 
in $G/B$ are parametrized by clans, which are sequences
$\gamma=(c_1,\ldots,c_n)$ of $n$ symbols $c_i$, each either $+$ or $-$
or a natural number, such that every natural number occurs either
exactly twice in $\gamma$ or not at all \cite{MO,Y}.  In this
parametrization the orbit corresponding to $(c_1,\ldots,c_n)$ consists
of flags $V_0\subset\cdots\subset V_n$ for which the dimension of
$V_i\cap P$ equals the number of $+$ signs and pairs of equal numbers
among $c_1,...,c_i$, for all $i$ between 1 and $n$.  In particular,
the number of $+$ signs and pairs of equal numbers in the entire clan
must be
exactly $p$.  We identify two clans if they have the same signs
in the same positions and pairs of equal numbers in the same positions
(so that for example $(1,+,1,-)$ is identified with $(2,+,2,-)$, but
not with $(1,+,-,1)$). We say that the clan $\gamma=(c_1,\ldots,c_n)$
{\sl includes the pattern} $(d_1,\ldots,d_m)$ if there are indices
$i_1<\cdots<i_m$ such that the (possibly shorter) clans
$(c_{i_1},\ldots,c_{i_m})$ and $(d_1,\ldots,d_m)$ are identified.  We
say that $\gamma$ {\sl
  avoids} $(d_1,\ldots,d_m)$ if it does not include it.  If $Q$ is a
parabolic subgroup of $G$, corresponding to an arrangement of the $n$
coordinates into blocks of consecutive coordinates (each block having
only one coordinate if $Q=B$), then the quotient $G/Q$ may be
identified with the set of partial flags $V_0\subset\cdots\subset V_m$
in $\mathbb C^n$ such that the dimension of $V_i$ is the sum of the
sizes of the first $i$ blocks of coordinates.  Then $K$-orbits in
$G/Q$
are likewise parametrized by clans, except that
we identify two clans whenever
corresponding blocks of coordinates have the same signatures (number of $+$ 
signs plus pairs of equal numbers, and similarly for $-$ signs) {\sl
  and} pairs of blocks in one clan have the same number of numbers in
common as the corresponding pairs of blocks in the other clan.
For example,
if $p=q=3$ and $Q$ corresponds to the coordinate arrangement
$(1),(2,3,4,5),(6)$, or to the middle three roots
in the Dynkin diagram of $G$, then the clans $(1,+,2,-,2,1)$ and
$(1,2,2,3,3,1)$ are identified.  If $p=q=4$ and $Q$ corresponds to the
arrangement $(1),(2,3,4),(5,6,7),(8)$ then the clans
$(1,+,1,-,+,2,-,2)$ and $(1,-,+,1,+,-,2,2)$ are identified, but
$(1,+,1,-,+,2,-,2)$ and $(1,+,2,-,+,1,-,2)$ are not, as
the first and second blocks share the number 1 in the first clan,
while the first and third blocks share this number in the second
clan.  

The above notions
of pattern inclusion and avoidance are motivated by the corresponding
ones for permutations in one-line notation, which Lakshmibai and
Sandhya used to characterize rationally smooth Schubert varieties in
type $A$ \cite{LS} and Billey later extended to the other classical types
\cite{B}.  

$K$-orbits in $G/B$ are partially ordered by
containment of their closures.  On the level of clans, this order
includes the following operations: to make an orbit larger, replace a
pair of (not necessarily
adjacent) opposite
signs by a pair of equal numbers; or
interchange a number with a sign so as to move the number farther
away from its equal mate in the clan (and on the same side); or
interchange a pair $a,b$ of
distinct numbers with $a$ to the left of $b$, provided that the mate of
$a$ lies to the left of
the mate of $b$
(\cite[5.12]{RS},\cite[2.4]{Y}).  Thus (the orbit
corresponding to) $(1,+,1,-)$ lies below $(1,2,1,2)$ and $(1,+,-,1)$, while
$(1,2,1,3,2,3)$ lies below $(1,3,1,2,2,3))$ but not below $(1,3,1,3,2,2)$. 
(These operations include but do {\sl not}
coincide with the ones generating the Matsuki-Oshima graph
\cite{MO}.)  In particular, the closed orbits are exactly those
whose clans have only signs, while the open orbit has clan
$(1,2,\dots,q,+\ldots,+,q,q-1,\ldots,1)$, with $p-q$ plus signs, if $p>q$.

We will need a formula for the dimension of the orbit
${\mathcal O}_{\gamma}$ corresponding to the clan
  $\gamma=(c_1,\ldots,c_n)$ \cite[2.3]{Y}.  
Set $d_{p,q}:=\frac{1}{2}(p(p-1) + q(q-1))$.  Then $\dim{\mathcal
  O}_\gamma$ is given by
$$
     d_{p,q}\,\,+\sum_{c_i=c_j\in{\mathbb N},i<j}
    (j-i-\#\{k\in{\mathbb N}: c_s = c_t = k\,\,\mbox{for some}\,
    s<i<t<j\})
$$
In particular, the closed orbits all have the same dimension
$d_{p,q}$.

We conclude this section by recalling the well-known
derived functor module construction on the level of $K$-orbits.  For
this purpose let $G$ be any complex reductive group and $K$ its fixed
points under an involution $\theta$.  Let $Q$ be any $\theta$-stable
parabolic subgroup of $G$, containing the $\theta$-stable Borel
subgroup $B$.  If $\mathfrak q$ is the Lie algebra of $Q$, then the
orbit $K\cdot\mathfrak q$ identifies with a closed orbit in the
partial flag variety $G/Q$ \cite[2.5]{RS}.  Its preimage
$\pi^{-1}(K\cdot{\mathfrak 
  q})$ in $G/B$ under the natural projection $\pi:G/B\rightarrow G/Q$ is the
support of a derived functor module; we call the open orbit in this
preimage a derived functor
orbit \cite[\S1]{Tr}.  Its closure fibers smoothly via $\pi$ over
$K\cdot\mathfrak q$ with fiber the 
flag variety $Q/B$ of $Q$ (which may be identified with the flag
variety of any Levi factor of $Q$), so it is smooth.  The clan of a derived functor orbit is obtained by concatenating the clans of the open orbits in the flag varieties of $\theta$-stable Levi subgroups; a typical example is $(1,2,3,+,+,3,2,1,+,-,4,-,4)$.

\section{Main result}
Now we can characterize the $K$-orbits with rationally smooth closure.

\newtheorem*{result}{Theorem}
\begin{result}
If the clan $\gamma= (c_1,\ldots,c_n)$ includes one of the patterns
$(1,+,-,1),$\linebreak $(1,-,+,1)$,$(1,2,1,2)$, $(1,+,2,2,1),(1,-,2,2,1),(1,2,2,+,1), (1,2,2,-,1)$, or\hfil\linebreak  $(1,2,2,3,3,1)$,  then the orbit ${\mathcal
  O}_{\gamma}$ does not have rationally smooth closure.  Otherwise
${\mathcal O}_{\gamma}$ is a derived functor orbit, so that its
closure is smooth.  In particular, Springer's necessary condition for
rational smoothness in \cite{Sp} is sufficient in this setting and
smoothness and
rational smoothness are equivalent.
\end{result}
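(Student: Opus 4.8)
The plan is to prove the two directions separately and then read off the concluding assertions. In one direction, including any of the seven patterns should destroy rational smoothness -- in fact should already destroy Springer's condition \cite{Sp}. In the other, a clan avoiding all seven should be a derived functor orbit, so that its closure is smooth by the fibration statement recalled above. Since smoothness implies rational smoothness, the two directions together show that on $\mathcal{O}_\gamma$ rational smoothness, smoothness, avoidance of the seven patterns, and Springer's condition all coincide, which is precisely what the last two sentences claim.

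For the necessity of pattern avoidance I would argue in two steps. First, a monotonicity principle: if $\gamma$ includes a pattern $\delta$ at positions $i_1<\cdots<i_m$, then a neighborhood of a well-chosen sub-orbit in $\overline{\mathcal{O}_\gamma}$ is, up to a smooth factor, a neighborhood in $\overline{\mathcal{O}_\delta}$ inside the flag variety for $GL_m$, so that rational smoothness -- and Springer's condition -- can only propagate downward under pattern inclusion. This is the analogue of the fact, due to Lakshmibai--Sandhya \cite{LS} and Billey \cite{B} in the Schubert setting, that rational smoothness is closed under pattern containment; I would prove it the same way, picking a $\theta$-stable Levi $L$ that contains a copy of $GL_m$ whose fixed points under the restricted involution realize the complexified maximal compact of $U(p',q')$ with $p'+q'=m$, and using the parabolic-induction formalism for $K$-orbits of Richardson--Springer \cite{RS}. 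Second, a finite check: each of $(1,+,-,1),(1,-,+,1),(1,2,1,2),(1,+,2,2,1),(1,-,2,2,1),(1,2,2,+,1),(1,2,2,-,1)$ lives in the flag variety for $GL_4$ or $GL_5$ (for $U(2,2)$ or $U(3,2)$), so one can compute the relevant local intersection cohomology by hand, or identify the closure explicitly and produce a singular point -- for instance $\overline{\mathcal{O}_{(1,+,-,1)}}$ is the preimage in $G/B$ of the singular Schubert divisor $\{V\in Gr(2,4):\dim(V\cap P)\ge 1\}$ -- and conclude that none of the seven closures is rationally smooth, indeed that each fails Springer's condition.

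For the sufficiency I would first establish a structure theorem for clans avoiding all seven patterns. Avoiding $(1,2,1,2)$ makes the number pairs (``arcs'') laminar; avoiding $(1,+,-,1)$ and $(1,-,+,1)$ forces the signs nested inside a given arc to all be of one sign; and avoiding $(1,\pm,2,2,1)$ and $(1,2,2,\pm,1)$ forces that once an arc properly contains another arc, it contains no signs except ones nested still deeper. One deduces that $\gamma$ is a concatenation of isolated signs and ``arc trees'', each a laminar family of arcs all of whose signs are of one sign and lie inside its leaves, each leaf enclosing a possibly empty block of equal signs. I would then show by induction on $n$ that any such clan is a derived functor orbit. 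If $\gamma$ has no arcs it is a closed orbit. Otherwise one peels off the coarsest nontrivial layer of the arc-tree decomposition to get a proper $\theta$-stable parabolic $Q\supseteq B$ and a $K$-orbit $\mathcal{O}_Q$ in $G/Q$ -- closed when every top-level piece is a fully nested arc tree, since then every subspace at a block boundary is forced to split as a subspace of $P$ plus a subspace of $P'$ of prescribed dimensions (a closed condition), and otherwise built by iterating the construction inside the branching blocks, where the inductive hypothesis supplies the smoothness -- for which $\overline{\mathcal{O}_Q}$ is smooth and $\mathcal{O}_\gamma$ is the open orbit of $\pi^{-1}(\overline{\mathcal{O}_Q})$; the fibration statement recalled above then yields smoothness of $\overline{\mathcal{O}_\gamma}$, and a dimension count with the formula recalled above confirms that $\mathcal{O}_\gamma$, not some smaller orbit, is the open one.

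The main obstacle, I expect, is the sufficiency half, and within it the passage from the combinatorial shape to the parabolic. Two points need care: that the seven avoidances force exactly the arc-tree shape with no stray configurations (a careful but elementary case analysis), and, more delicately, that the parabolic extracted from that shape -- after possibly descending several levels into nested arc trees -- reaches an orbit in $G/Q$ with smooth closure whose pullback has $\mathcal{O}_\gamma$, and not a proper sub-orbit, as open orbit; choosing the right parabolic at each stage and checking that the induction closes is the technical heart of the argument. The monotonicity step in the necessity half also requires the pattern positions organized into an honest Levi block, but this follows the familiar template from the study of Schubert varieties.
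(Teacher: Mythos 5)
Your sufficiency argument is essentially the paper's: the same structural consequences of the seven avoidances (laminarity of arcs, uniformity and inward propagation of nested signs), followed by an induction that peels off a parabolic $Q\supseteq B$ and uses the smooth-fibration statement to conclude. The paper organizes the induction into three explicit cases (a sign outside every arc, several top-level blocks, or an outermost arc spanning the whole clan), and then separately identifies $\mathcal{O}_\gamma$ as a derived functor orbit by passing to an innermost arc, but this matches your ``peel off the coarsest layer'' plan in substance.

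Your necessity argument, however, takes a genuinely different and incompletely justified route. The paper does not use a pattern-monotonicity principle at all. Instead, for the given full clan $\gamma$ it builds one specific closed orbit $\mathcal{O}$ below $\mathcal{O}_\gamma$ (replace the pattern's number pair(s) by alternating opposite signs $-,+$ or $-,+,-,+$, and every remaining pair $a,\ldots,a$ by $+,\ldots,-$), and then applies Springer's necessary condition directly: it counts the noncompact root reflections sending $\mathcal{O}$ to orbits weakly below $\mathcal{O}_\gamma$ and shows this count strictly exceeds $\dim\mathcal{O}_\gamma - d_{p,q}$. That is a direct, clan-by-clan computation and needs no reduction to $GL_4$ or $GL_5$. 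Your plan instead invokes a monotonicity lemma --- that a pattern occurrence in $\gamma$ at arbitrary positions $i_1<\cdots<i_m$ forces an (étale-local, up to smooth factor) copy of $\overline{\mathcal{O}_\delta}$ inside $\overline{\mathcal{O}_\gamma}$ --- and then a finite check. That lemma is not in the paper and is not a routine import from the Schubert-variety literature: the positions $i_1<\cdots<i_m$ need not be consecutive, so they do not form a Levi block, and for $K$-orbit closures the analogue of Billey--Postnikov's pattern map / interval pattern embedding is a nontrivial theorem in its own right (established only later, and under additional hypotheses on the embedding). The sentence ``this follows the familiar template from the study of Schubert varieties'' is precisely where the gap sits; without a proof of that propagation statement, the finite check on seven small clans does not by itself rule out rational smoothness of a large $\gamma$ containing one of them. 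If you want to keep your structure, you would need to prove the $K$-orbit pattern-monotonicity result; otherwise, Springer's reflection-counting criterion applied directly to $\gamma$, as in the paper, is the cleaner path.
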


\begin{proof}
Suppose first that $\gamma$ includes one of the above patterns. If this pattern has just two equal numbers, replace them by $-$ and $+$, in that order; if it includes two such pairs, replace the four numbers by $-,+,-,+$, in that order; if it includes three such pairs, replace the numbers by $-,+,-,+,-,+$, in that order. 
In all eight cases, continue by replacing every pair
$a,\ldots,a$ of equal numbers in $\gamma$ by $+,\ldots,-$.  We obtain
a clan corresponding to a closed orbit $\mathcal O$ below ${\mathcal
  O}_\gamma$ in the partial order.  Now Springer has defined an action
of the noncompact root reflections in the Weyl group $S_n$ on the
closed orbits, sending each such orbit to a higher orbit whose clan
has exactly two numbers; more precisely, any two opposite signs in the
clan of the closed orbit may be replaced by a pair of equal numbers
\cite[3.1,4.1]{Sp}.  One easily checks that
more than $\dim{\mathcal O}_\gamma - d_{p,q}$ of these reflections
send $\mathcal O$ to an orbit lying between it and ${\mathcal
  O}_\gamma$, whence ${\mathcal O}_\gamma$ is not rationally smooth,
as claimed \cite[3.2,3.3]{Sp}.

Now suppose that $\gamma$ avoids the above patterns.  Then the
intervals $[s,t]$ of indices $s,t$ with $c_s=c_t\in{\mathbb N}$ are
such that any two of them are one contained in the other or disjoint.
All signs lying between any pair of equal numbers in
$\gamma$ are the same.  If a sign lies between a pair of equal numbers, then it also lies between every pair of equal numbers enclosed by the first pair.  Finally, if one pair of equal numbers lies inside another, then the pairs of equal numbers lying inside this pair form a single nested chain.  The orbit must then be a derived functor orbit and so have smooth closure.
\end{proof}

In future work we hope to find similar pattern avoidance criteria for
rational smoothness of $K$-orbit closures in the flag varieties of
other classical groups.  There are two nonsmooth orbit closures for
$GL(4,{\mathbb R})$, none for $SU^*(4)$, and one for $SU^*(6)$.

\end{document}